\newtheorem{theorem}{Theorem}[section]
\newtheorem{proposition}[theorem]{Proposition}
\newtheorem{corollary}[theorem]{Corollary}
\theoremstyle{definition}
\theoremstyle{remark}
\numberwithin{equation}{section}
\begin{document}
\title[Bosonic representations of the toroidal superalgebras of type $D$]{Bosonic vertex representations of the toroidal superalgebras in type $D(m,n)$}
\author{Naihuan Jing}
\address{Department of Mathematics, North Carolina State University, Raleigh,
NC 27695, USA} \email{jing@math.ncsu.edu}
\author{Chongbin Xu*}
\address{
School of Mathematics \& Information, Wenzhou University,
Wenzhou, Zhejiang 325035, China} \email{xuchongbin1977@126.com}
\thanks{*Corresponding author}

 \keywords{toroidal Lie
superalgebra, vertex operators, free fields}
\subjclass[2010]{Primary: 17B60, 17B67, 17B69; Secondary: 17A45,
81R10}

\begin{abstract}  In this paper, vertex representations of the 2-toroidal Lie
superalgebras of type $D(m, n)$ are constructed using both bosonic fields
and vertex operators based on their loop algebraic presentation.
\end{abstract} \maketitle

\section{Introduction}
Let $\mathfrak{g}=\mathfrak g_{\overline{0}}+\mathfrak
g_{\overline{1}}$ be a finite dimensional complex simple Lie
superalgebra under the Lie superbracket, and let $R$ be the algebra
of  Laurent polynomials in $\nu$ commuting indeterminates. By
definition, the $\nu$-toroidal Lie superalgebra associated to
$\mathfrak{g}$ is the perfect universal central extension of the
loop Lie-superalgebra $L(\mathfrak{g})=\mathfrak{g}\otimes R$,
equivalently,  one can realize it as certain homomorphic image of
the universal central extension $T(\mathfrak{g})=\mathfrak{g}\otimes
R+ \Omega_{R}/dR$, where $\Omega_{R}/dR$ is the K\"ahler
differential of $R$ modulo the exact forms.

Representations of toroidal Lie algebras have been actively
studied and a lot of known constructions for classical affine Lie algebras \cite{FF}
 have been extended to the toroidal setting.   In \cite{MRY} Moody, Rao and Yokonuma gave
the loop algebra realization of the 2-toroidal algebras and constructed vertex representation for the simply laced types.
Vertex operator representations of toroidal Lie algebras in type $B$ were
given in \cite{T}, and then generalized to multi-loop toroidal Lie algebras of the same type in \cite{JM}.
A uniformed fermionic construction of the 2-toroidal algebras of the classical types
were given by Misra and the authors \cite{JMX} and
subsequently a general bosonic construction was realized in \cite{JX1}.
Moreover, representations of the universal toroidal Lie algebras have been studied in \cite{BB}.
A Wakimoto type realization was also given for the toroidal Lie algebra in type A in \cite{BCJ}
using noncommutative differential operators (see \cite{JMT} for an earlier construction for type $A_1$) was also given.

The study of toroidal super Lie algebras is more involved and
requires new method to treat the odd subalgebra.
Based on Kac-Wakimoto's work on the affine algebras, Rao
constructed vertex representations for the toroidal general linear
superalgebra in \cite{R}. The authors have showed a MRY-type presentation for the 2-toroidal Lie
superalgebras, and constructed the unitary and orthosymplectic series by means of free
fields in \cite{JX1} based upon the well-known constructions of Lie
superalgebras \cite{FF, F} in level one. Moreover, a new vertex
representation for 2-toroidal special linear superalgebra was also given in
\cite{JX2}.
However, it is not known if other constructions of the toroidal Lie algebras such as the generalized Feingold-Frenkel construction
given in \cite{JM}
can be lifted to the super situation.

{In this paper, we use vertex
operators to realize the even part of the orthosymplectic 
toroidal Lie algebra  
and bosonic operators for the
remaining portion and then combine these two types of operators to
construct the whole algebra. In particular, we have generalized the
level $-1$ construction of the orthogonal and symplectic affine Lie
algebras \cite{JM} to the super case. Our method is a natural
generalization of \cite{JX2} given the close relationship between
orthosymplectic superalgebras and classical Lie algebras.

 The paper is organized as follows. In section 2 we
recall the notion of 2-toroidal Lie superalgebras  of type $D(m,n)$
 and the loop-algebra presentation. In section 3 we use certain vertex
operators and Weyl bosonic fields to give a level $-1$
representation of the Lie superalgebras.

\section{Toroidal Lie  superalgebras of type $D(m,n)$}
For two fixed natural numbers $m,n\in\mathbb{N}$, let $V=V_{\overline{0}}\oplus
V_{\overline{1}}$ be the super vector space with $\mbox{dim}V_{\overline{0}}=2m,\
 \mbox{dim}V_{\overline{1}}=2n$. The super-endomorphisms of $V$
form the general linear superalgebra $\mathfrak{gl}(2m|2n)$ under
the superbracket given by
$$[f, g]=fg-(-1)^{|f|\cdot |g|}gf$$
for homogeneous
linear operators $f, g$. Let $(\ \cdot\ |\ \cdot\ )$ be a non-degenerate bilinear
form on $V$ such that $(V_{\overline{0}}|V_{\overline{1}})=0$, and the
restriction of $(\cdot|\cdot)$ to $V_{\overline{1}}$ is symmetric
and the restriction to $V_{\overline 0}$ is skew-symmetric. For
$\overline{\alpha}=0,1$, let
\begin{align*}
&\mathfrak{osp}(2m|2n)_{\overline{\alpha}}\\
&=\big\{a\in\mathfrak{gl}(2m|2n)_{\overline{\alpha}}\big|(a(x)|y)
+(-1)^{\alpha p(x)}(x|a(y))=0,x,y\in V \big\}
\end{align*}
and
$\mathfrak{osp}(2m|2n)=\mathfrak{osp}(2m|2n)_{\overline{0}}\oplus
\mathfrak{osp}(2m|2n)_{\overline{1}}$. Then $\mathfrak{osp}(2m|2n)$
forms the Lie superalgebra of type $D(m,n)$ if $m>1$. Note that
\begin{eqnarray*}
   D(m|n)_{\overline{0}} \cong
\mathfrak{so}(2m)\oplus\mathfrak{sp}(2n).
\end{eqnarray*}

Let us denote the Lie  superalgebra $D(m,n)$ by  $\mathfrak{g}$, and
let $R=\mathbb C[s^{\pm1},t^{\pm1}]$ be the complex commutative ring
of Laurant polynomials in $s, t$. The loop Lie superalgebra
$L(\mathfrak{g}):=\mathfrak{g}\otimes R$ is defined under the Lie
superbracket 
$$ [x\otimes a, y\otimes b]=[x, y]\otimes ab. $$

Let $\Omega_R$ be the $R$-module of K\"ahler differentials $\{bda
|a, b\in R\}$ and $d\Omega_R$ be the space of exact forms. The
quotient space $\Omega_R/d\Omega_R$ has a basis consisting of
$\overline{s^{-1}t^{k}ds}$, $\overline{s^{l}t^{-1}dt}$,
where $k, l\in\mathbb Z$. Here $\overline{a}$
denotes the coset $a+d\Omega_R$. The toroidal superalgebr
$T(\mathfrak{g})$ is defined to be the Lie superalgebra on the
following vector space:
\begin{equation*}
T({\mathfrak{g}})=\mathfrak{g}\otimes R\oplus \Omega_R/d\Omega_R
\end{equation*}
with the Lie superbracket ($x,y\in \mathfrak{g},~a,b\in R$):
$$[x\otimes a, y\otimes b]=[x, y]\otimes ab+(x|y)\overline{(da)b},
\quad  \Omega_R/d\Omega_R ~\mbox{is central}
$$ and the parity
is specified by: $$p(x\otimes a)=p(x),\quad
p(\Omega_R/d\Omega_R)=\overline{0}.$$

Let $A=(a_{ij})$ be the extended distinguished Cartan matrix of the
affine Lie superalgebra of type $D(m,n)^{(1)}$, i.e.
$$\left(
  \begin{array}{ccccccccccc}
    2 & -1 & 0 & \cdots & 0 & 0 & 0 & \cdots & 0 & 0 & 0 \\
    -2 & 2 & -1&\cdots& 0& 0 & 0 &\cdots & 0 & 0 & 0 \\
    0 & -1& 2 &\cdots & 0 & 0 & 0 & \cdots & 0 & 0 & 0 \\
    \vdots & \ddots & \ddots & \ddots&\ddots & \ddots &\ddots & \ddots& \ddots & \ddots& \vdots \\
    0 & 0 &  \ddots & -1 & 2& -1 &\ddots & 0 & 0 & 0& 0 \\
    0 & 0 & \cdots & 0 & -1 & 0 & 1 &\ddots & 0 & 0& 0 \\
    0 & 0 & \cdots& 0 & 0 &-1& 2 & -1 & 0 & 0 & 0 \\
    \vdots & \ddots& \ddots & \ddots & \ddots & \ddots & \ddots & \ddots & \ddots & \ddots & \vdots \\
    0 & 0 & 0 & \cdots& 0 & 0 & 0 & \ddots & 2 & -1 & -1 \\
    0 & 0 & 0 &\cdots & 0 & 0 & 0 & \cdots & -1 & 2 & 0 \\
    0 & 0 & 0 &\cdots & 0 & 0 & 0 & \cdots & -1 & 0 & 2 \\
  \end{array}
\right)\leftarrow (n+1)\mbox{-th}$$ and
$Q=\mathbb{Z}\alpha_{0}\oplus\cdots\oplus \mathbb{Z}\alpha_{m+n}$ be
its root lattice. The odd simple root is $\alpha_{n}$.
The standard invariant form is then given by
$(\alpha_{i},\alpha_{j})=d_{i}a_{ij}$, where
$$(d_{0},d_{1},\cdots,d_{n},d_{n+1},\cdots,d_{n+m})
=(2,\underbrace{1,\cdots,1}_{n},\underbrace{-1,\cdots,-1}_{m}).
$$
Note that $d_i=\frac12(\alpha_i, \alpha_i)$ for non-isotropic roots.

To organize the commutation relations for toroidal Lie algebras, we
use formal series. The formal delta function is defined by
$\delta(z-w)=\sum_{n\in\mathbb{Z}}z^{-n-1}w^{n}$, which can be
formally viewed as a sum of two power series expanded at opposite
directions. For this purpose we denote that
\begin{align*}i_{z,w}\frac{1}{(z-w)}&=\sum_{n=0}^{\infty}z^{-n-1}w^{n}\\
i_{w, z}\frac{1}{(z-w)}&=-\sum_{n=0}^{\infty}w^{-n-1}z^{n},
\end{align*}
where $i_{z,w}$ means that the
power series is expanded in the domain of $|z|>|w|$. Subsequently one has that \cite{K1}:
$$\partial^{(j)}_{w}\delta(z-w)=i_{z,w}\frac{1}{(z-w)^{j+1}}-i_{w,z}\frac{1}{(-w+z)^{j+1}}.$$
where $\partial^{(j)}_{w}=\partial^{j}_{w}/j!$. By convention if we write
 a rational function in the variable $z-w$
 it is usually assumed that
 the power series is expanded in the region
 $|z|>|w|$.

The following loop algebra presentation of the 2-toroidal Lie
superalgebras was proved in \cite{JX1}.
\begin{theorem} The  toroidal Lie superalgebra $T(D(m,n))$
is isomorphic to the Lie superalgebra
 $\mathfrak{T}(A)$  generated by
$$\{\mathcal{K},\alpha_{i}(k),x^{\pm}_{i}(k)|\, 0\leqslant i\leqslant
m+n,k\in\mathbb{Z}\}$$
 with parities given as : \emph{(}$0\leqslant i\leqslant m+n,k\in\mathbb{Z}$\emph{)}
$$p(\mathcal{K})=p(\alpha_{i}(k))=\overline{0},\quad p(x^{\pm}_{i}(k))=p(\alpha_{i}).$$
subject to the following relations
 \begin{eqnarray*}
    &1)& [\mathcal{K},\alpha_{i}(z)]=[\mathcal{K},x^{\pm}_{i}(z)]=0; \\
    &2)& [\alpha_{i}(z),\alpha_{j}(w)]=(\alpha_{i}|\alpha_{j})\partial_{w}\delta(z-w)\mathcal{K};\\
    &3)& [\alpha_{i}(z),x^{\pm}_{j}(w)]=\pm(\alpha_{i}|\alpha_{j})x^{\pm}_{j}(w)\delta(z-w); \\
    &4)& [x^{+}_{i}(z),x^{-}_{j}(w)]=0,\mbox{\emph{if}}~i\neq j   ;\\
    &&[x^{+}_{i}(z),x^{-}_{i}(w)]=-\{(\alpha_{i}(w)
    \delta(z-w)+\partial_{w}\delta(z-w)\mathcal{K}\},\mbox{\emph{if}}~(\alpha_{i}|\alpha_{i})=0\\
    &&[x^{+}_{i}(z),x^{-}_{i}(w)]=-\frac{2}{(\alpha_{i}|\alpha_{i})}\{(\alpha_{i}(w)
    \delta(z-w)+\partial_{w}\delta(z-w)\mathcal{K}\},\mbox{\emph{if}}~(\alpha_{i}|\alpha_{i})\neq0\\
    &5)&[x^{\pm}_{i}(z),x^{\pm}_{i}(w)]=0;\\
    &&[x^{\pm}_{i}(z),x^{\pm}_{j}(w)]=0,\mbox{\emph{if}}~a_{ii}=a_{ij}=0,i\neq j;\\
    &&[x^{\pm}_{i}(z_{1}),[x^{\pm}_{i}(z_{2}),x^{\pm}_{j}(w)]]=0,\mbox{\emph{if}}~a_{ii}=0,a_{ij}\neq0,i\neq
    j;\\
    &&[x^{\pm}_{i}(z_{1}),\cdots,[x^{\pm}_{i}(z_{1-a_{ij}}),x^{\pm}_{j}(w)]\cdots]=0,
    \mbox{\emph{if}}~a_{ii}\neq0,i\neq j,
    \end{eqnarray*}
where we have used the generating series $\alpha_{i}(z)=\sum_{k\in\mathbb{Z}}\alpha_{i}(k)z^{-k-1},
x^{\pm}_{i}(z) =\sum_{k\in\mathbb{Z}}x^{\pm}_{i}(k)z^{-k-1}$.
 \end{theorem}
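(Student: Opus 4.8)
The plan is to exhibit an explicit realization of the abstract generators inside $T(\mathfrak g)$ and then prove that the induced map is an isomorphism via the universal central extension property. Throughout I treat $s$ as the affine direction, so that
\[
\widehat{\mathfrak g}=\mathfrak g\otimes\mathbb C[s^{\pm1}]\oplus\mathbb C\,\overline{s^{-1}ds}
\]
is the affine Lie superalgebra of type $D(m,n)^{(1)}$ whose distinguished Chevalley generators $e_i,f_i$ and Cartan elements are indexed by the rows of $A$; here $e_0,f_0$ carry the factors $s^{\pm1}$ produced by the highest root $\theta$, the canonical central element is $c=\overline{s^{-1}ds}$, and $\alpha_n$ is the isotropic odd root with $(\alpha_n|\alpha_n)=0$. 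First I would define $\Phi\colon\mathfrak T(A)\to T(\mathfrak g)$ on generators by $x_i^{+}(k)\mapsto e_i\otimes t^{k}$, $x_i^{-}(k)\mapsto f_i\otimes t^{k}$, $\alpha_i(k)\mapsto h_{\alpha_i}\otimes t^{k}$ and $\mathcal K\mapsto\overline{t^{-1}dt}$, where $h_{\alpha_i}\in\mathfrak h$ is the element identified with $\alpha_i$ under the invariant form, so that each generating series becomes the corresponding $t$-current of an affine generator. Note that since $h_{\alpha_0}=c-h_{\theta}$, the image of $\alpha_0(k)$ secretly contains the central element $c\otimes t^{k}=\overline{s^{-1}t^{k}ds}$.

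The computational core is to check that these images satisfy relations $1)$--$5)$, making $\Phi$ a homomorphism. Using $[x\otimes a,y\otimes b]=[x,y]\otimes ab+(x|y)\overline{(da)b}$ one finds $[h_{\alpha_i}\otimes t^{k},h_{\alpha_j}\otimes t^{l}]=(\alpha_i|\alpha_j)\,k\,\overline{t^{k+l-1}dt}$, and since $\overline{t^{k+l-1}dt}$ vanishes in $\Omega_R/d\Omega_R$ unless $k+l=0$, this collapses to relation $2)$ with $\mathcal K=\overline{t^{-1}dt}$; relation $3)$ follows from $[h_{\alpha_i},e_j]=(\alpha_i|\alpha_j)e_j$. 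Relation $4)$ is the delicate one: the bracket $[e_i\otimes t^{k},f_i\otimes t^{l}]$ reproduces the Cartan current $\alpha_i(k+l)$ up to the scalar $-\tfrac{2}{(\alpha_i|\alpha_i)}$ (resp. $-1$ in the isotropic case) together with the central term matching $\partial_w\delta(z-w)\mathcal K$, and for $i=0$ the K\"ahler part simultaneously supplies the hidden piece $\overline{s^{-1}t^{k+l}ds}$ sitting inside $\alpha_0(k+l)$; the two normalizations in $4)$ correspond exactly to $(\alpha_i|\alpha_i)\ne0$ and to the isotropic odd root $(\alpha_i|\alpha_i)=0$. The Serre-type relations $5)$ are inherited from the defining relations of $\widehat{\mathfrak g}$ after tensoring with $t$-powers, with the super-signs carried along by $p(x_i^{\pm}(k))=p(\alpha_i)$.

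Next I would show $\Phi$ is surjective. The affine Chevalley generators generate $\widehat{\mathfrak g}$, so their $t$-currents generate $\mathfrak g\otimes R$ modulo the center, and the whole of $\Omega_R/d\Omega_R$ is recovered in the image: $\overline{s^{-1}t^{k}ds}$ lies in $\Phi(\alpha_0(k))$, $\overline{t^{-1}dt}=\Phi(\mathcal K)$, and the remaining basis vectors $\overline{s^{l}t^{-1}dt}$ arise as K\"ahler terms of brackets $[x\otimes s^{a}t^{k},y\otimes s^{b}t^{-k}]$ with $a+b=l$, all lying in the derived subalgebra of the image. For injectivity I would verify that $\mathfrak T(A)$ is perfect and that, modulo its center, relations $1)$--$5)$ degenerate precisely to the loop relations $[x\otimes a,y\otimes b]=[x,y]\otimes ab$, so that $\mathfrak T(A)$ is a perfect central extension of $L(\mathfrak g)$. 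Since $T(\mathfrak g)$ is the universal central extension of $L(\mathfrak g)$, there is a unique covering $\Psi\colon T(\mathfrak g)\to\mathfrak T(A)$ lifting the identity on $L(\mathfrak g)$, and evaluating $\Phi$ and $\Psi$ on generators shows $\Phi\circ\Psi=\mathrm{id}$ and $\Psi\circ\Phi=\mathrm{id}$.

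The main obstacle I expect is controlling the center of $\mathfrak T(A)$: one must show the relations force its center to have exactly the size of $\Omega_R/d\Omega_R$, neither collapsing the classes $\overline{s^{l}t^{-1}dt}$ nor creating spurious central directions, so that the covering $\Psi$ is genuinely bijective. This requires checking that the Jacobi identities among the current relations are consistent and reproduce Kassel's presentation of $\Omega_R/d\Omega_R$. The isotropic odd root $\alpha_n$ aggravates this step, since it obeys the nonstandard relation $[x_i^{\pm}(z_1),[x_i^{\pm}(z_2),x_j^{\pm}(w)]]=0$ in place of a full $(1-a_{ij})$-fold Serre relation, and the super-signs must be tracked carefully when reducing higher brackets to confirm that no central element beyond those of $T(\mathfrak g)$ is produced.
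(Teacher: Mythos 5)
A point of reference first: the paper itself contains no proof of this theorem; it is quoted from \cite{JX1}, where the argument follows the Moody--Rao--Yokonuma strategy. That is exactly the strategy you outline — realize the generators inside $T(\mathfrak{g})$, verify relations 1)--5), and then identify $\mathfrak{T}(A)$ with $T(\mathfrak{g})$ via the universal central extension property — and your homomorphism $\Phi$, the relation checks (including the observation that $\overline{s^{-1}t^{k}ds}$ is hidden inside the image of $\alpha_0(k)$), and the surjectivity argument are all sound in outline.

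However, there is a genuine gap at the decisive step, and you flag it yourself without closing it. To invoke the universal property of $T(\mathfrak{g})$ you must first know that $\mathfrak{T}(A)$ is a \emph{central} extension of $L(\mathfrak{g})$, i.e.\ that the kernel of $\pi\circ\Phi\colon \mathfrak{T}(A)\to L(\mathfrak{g})$ is central in $\mathfrak{T}(A)$. This is not a matter of "checking that the Jacobi identities among the current relations are consistent": the kernel consists of arbitrary iterated brackets of generators that happen to die in the loop algebra, and relations 1)--5) give no direct handle on such elements. The standard resolution (in \cite{MRY}, and its super analogue in \cite{JX1}) is to endow $\mathfrak{T}(A)$ with the grading by the affine root lattice times $\mathbb{Z}$, and to prove — using integrable $\mathfrak{sl}_2$-theory for the non-isotropic simple roots and an $\mathfrak{sl}(1|1)$-type analysis with the modified Serre relation for the odd isotropic root $\alpha_n$ — that components attached to non-roots vanish and real-root components are at most one-dimensional, so that the kernel is concentrated in imaginary-root degrees where centrality can be verified against the generators. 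Without this analysis, $\Psi$ cannot even be constructed. A smaller, fixable slip: "evaluating $\Phi$ and $\Psi$ on generators" is not meaningful since $\Psi$ is defined only abstractly; one instead gets $\Phi\circ\Psi=\mathrm{id}$ from uniqueness of morphisms of central extensions out of the universal one, and bijectivity of $\Psi$ from surjectivity, which in turn uses perfectness of $\mathfrak{T}(A)$. That detail can be repaired in a line; the missing centrality argument is the real content of the theorem and is absent from your proposal.
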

Note that all brackets in the relations are understood as super-brackets.

\section{Representations of toroidal Lie superalgebras}
This section is devoted to realization of the
toroidal Lie superalgebra of $D(m,n)$ using both bosonic fields and
vertex operators.

 Let $\varepsilon_{i}\ (0\leqslant
i\leqslant n+m+1)$ be an orthomormal basis of the vector space
$\mathbb{C}^{n+m+2}$ and
$\delta_{i}=\sqrt{-1}\varepsilon_{n+i}~(1\leqslant i \leqslant
m+1)$, then the distinguished simple root systems, positive root
systems and the longest distinguished root of the Lie superalgebra of
type $D(m,n)$ can be represented in terms of vectors
$\varepsilon_{i}$'s and $\delta_{i}$'s as follows.
\begin{align*}
\triangle_{+}&=\big\{\varepsilon_{i}\pm\varepsilon_{j},
\delta_{k}\pm\delta_{l}|1\leqslant i<j\leqslant n,1\leqslant k< l\leqslant m\big\},\\
&\cup\big\{2\delta_{k},\varepsilon_{i}\pm\delta_{i}|1\leqslant
i\leqslant n,1\leqslant k\leqslant n\big\},\\
\Pi&=\big\{\alpha_{1}=\varepsilon_{1}-\varepsilon_{2},\cdots,
\alpha_{n-1}=\varepsilon_{n-1}-\varepsilon_{n},\alpha_{n}=\varepsilon_{n}-\delta_{1}, \\
&\alpha_{n+1}=\delta_{1}-\delta_{2}, \cdots,
 \alpha_{n+m-1}=\delta_{m-1}-\delta_{m}, \alpha_{n+m}=\delta_{m-1}+\delta_{m}, \\ 
\theta &=2\alpha_{1}+\cdots+2\alpha_{n+m-2}+\alpha_{n+m-1}+\alpha_{n+m}=2\varepsilon_{1}.
\end{align*}


 Let $\overline{c}=\varepsilon_{0}+\delta_{m+1}$ and define
 $\alpha_{0}=\overline{c}-\theta, \beta=-\overline{c}+\varepsilon_{1}$, then
$\alpha_{0}=-\beta-\varepsilon_{1}$. Note that
$(\beta|\beta)=1,(\beta|\varepsilon_{i})=\delta_{1,i}$. Let
$\mathcal {P}$ be the vector spaces spanned by the set
$\{\overline{c},\varepsilon_{i}|1\leqslant i\leqslant n+m\}$ and
$\mathcal {P}^{*}$ be its dual space. Let $\mathcal{C}=\mathcal
{P}\oplus \mathcal {P}^{*} $ and define the bilinear form $\langle \ ,  \ \rangle$ on $\mathcal{C}$ as
follows: $\mbox{for}~ a,b\in\mathcal {P}$
\begin{eqnarray*}
   && \langle b^{*},a\rangle=-\langle
a,b^{*}\rangle=(a,b), \quad
 \langle b,a\rangle=\langle
a^{*},b^{*}\rangle=0,
\end{eqnarray*}

Let $\mathcal {A}(\mathbb{Z}^{2n+2m+2})$ be the Weyl algebra generated
by $\{u(k)|u\in \mathcal {C},k\in\mathbb{Z}\}$ with the defining
relations:
$$[u(k),v(l)]=\langle u,v\rangle\delta_{k,-l}$$
for $u,v\in \mathcal {C}$ and $ k,l\in\mathbb{Z}$.

We define the representation space of $\mathcal
{A}(\mathbb{Z}^{2(n+m+1)})$ by
$$\mathfrak{F}=\bigotimes_{a_{i}}\Big(\bigotimes_{k\in\mathbb{Z}_{+}}\mathbb{C}[a_{i}(-k)]
\bigotimes_{k\in\mathbb{Z}_{+}}\mathbb{C}[a^{*}_{i}(-k)]\Big),
$$
where $a_{i}$ runs though a fixed basis in $\mathcal{P}$, consisting
of, say $\overline{c}$ and $\varepsilon_{i}$'s. The algebra
$\mathcal {A}(\mathbb{Z}^{2(n+m+1)})$ acts on the space by the usual
action: $a(-k)$ acts as a creation operator and $a(k)$ an
annihilation operator.

For $u\in \mathcal{C}$, we define the formal power series with
coefficients from the associative algebra $\mathcal
{A}(\mathbb{Z}^{2(n+m+1)})$:
$$u(z)=\sum_{k\in\mathbb{Z}}u(k)z^{-k-1},$$
which is a bosonic field acting on the Fock space $\mathfrak{F}$.

\begin{proposition}\cite{JMX} The bosonic fields satisfy the following
communication relation:
$$[u(z),v(w)]=\langle u,v\rangle\delta(z-w).$$
\end{proposition}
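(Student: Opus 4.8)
The plan is to reduce the identity to a purely formal manipulation of generating series: I would substitute the mode expansions of $u(z)$ and $v(w)$ into the bracket and then invoke the defining relation of the Weyl algebra $\mathcal{A}(\mathbb{Z}^{2(n+m+1)})$ to collapse the resulting double sum. Before computing, I would record the structural observation that every element of $\mathcal{C}=\mathcal{P}\oplus\mathcal{P}^{*}$ is even, so that the modes $u(k)$ are even elements of the associative algebra $\mathcal{A}(\mathbb{Z}^{2(n+m+1)})$; consequently the relevant superbracket here is an ordinary commutator, no Koszul signs intervene, and the whole computation is bilinear in $u$ and $v$.

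Next, inserting $u(z)=\sum_{k}u(k)z^{-k-1}$ and $v(w)=\sum_{l}v(l)w^{-l-1}$ and expanding by bilinearity produces the double series $\sum_{k,l}[u(k),v(l)]\,z^{-k-1}w^{-l-1}$. Applying the relation $[u(k),v(l)]=\langle u,v\rangle\delta_{k,-l}$ pulls the scalar $\langle u,v\rangle$ outside the sum and imposes the pairing of the two summation indices, collapsing the double series to a single sum over $\mathbb{Z}$. After reindexing, this single sum is exactly the coefficient expansion defining the formal delta function $\delta(z-w)=\sum_{n\in\mathbb{Z}}z^{-n-1}w^{n}$, which gives $[u(z),v(w)]=\langle u,v\rangle\delta(z-w)$ as claimed.

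The computation is essentially mechanical, so there is no serious obstacle; the only genuine points to monitor are the formal legitimacy of the rearrangements and the bookkeeping of the mode indices. Since $\delta(z-w)$ is a well-defined element of the space of formal distributions in $z,w$ and each homogeneous component of the double series receives only a single contribution once the index constraint is imposed, the interchange of summation is justified and no convergence question arises.

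Finally, as a consistency check I would verify that the answer respects the antisymmetry of $\langle\ ,\ \rangle$ recorded in the definitions $\langle b,a\rangle=\langle a^{*},b^{*}\rangle=0$ and $\langle b^{*},a\rangle=-\langle a,b^{*}\rangle$. Swapping $(u,z)$ with $(v,w)$ sends the right-hand side to $\langle v,u\rangle\delta(w-z)=-\langle u,v\rangle\delta(z-w)$, and since the formal delta satisfies $\delta(z-w)=\delta(w-z)$ this agrees with the required identity $[v(w),u(z)]=-[u(z),v(w)]$. This confirms that the antisymmetric form and the commutator of bosonic fields are compatible, completing the verification.
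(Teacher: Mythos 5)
Your overall strategy---expand in modes, apply the Weyl-algebra relation, collapse the double sum---is the natural one, and in fact the paper offers no argument of its own to compare against: Proposition 3.1 is simply quoted from \cite{JMX}. Your parity remark (all of $\mathcal{C}$ is even, so the bracket is an honest commutator) and your closing antisymmetry check are both correct.

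However, the key step of your computation is false under the conventions this paper actually states, and it is exactly the index bookkeeping you promised to monitor. With $u(z)=\sum_{k}u(k)z^{-k-1}$ and $[u(k),v(l)]=\langle u,v\rangle\delta_{k,-l}$, the constraint $l=-k$ gives
\[
[u(z),v(w)]=\langle u,v\rangle\sum_{k\in\mathbb{Z}}z^{-k-1}w^{k-1}
=\langle u,v\rangle\, w^{-1}\delta(z-w),
\]
which is \emph{not} $\langle u,v\rangle\delta(z-w)$. No reindexing can repair this: every monomial $z^{-k-1}w^{k-1}$ in the collapsed sum has total degree $-2$, whereas every monomial $z^{-n-1}w^{n}$ of $\delta(z-w)$ has total degree $-1$, so the two formal distributions cannot coincide. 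The stated identity holds only after an index shift in the conventions---for instance taking the mode relation to be $[u(k),v(l)]=\langle u,v\rangle\delta_{k+l+1,0}$, or expanding the two dual families asymmetrically (the fields from $\mathcal{P}$ with $z^{-k}$ and those from $\mathcal{P}^{*}$ with $z^{-k-1}$, or vice versa), which is the kind of normalization under which the cited result of \cite{JMX} is actually established. Either fix makes your collapse argument go through verbatim, but a correct proof must adopt one of them explicitly, or at least flag the off-by-one inconsistency between the paper's definition of the Weyl algebra and the proposition. As written, your assertion that the collapsed sum ``is exactly the coefficient expansion defining the formal delta function'' is precisely the point where the argument breaks.
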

Let
$L=\mathbb{Z}\varepsilon_{1}\oplus\cdots\oplus\mathbb{Z}\varepsilon_{m+n}$
and $\mathfrak{h}=L\otimes_{Z}\mathbb{C}$ be its complex hull. We
view $\mathfrak{h}$ as an abelian Lie algebra and define its central
extension
$$\widehat{\mathfrak{h}}=\bigoplus_{k\in \mathbb{Z}}\mathfrak{h}\otimes t^{k}\oplus
\mathbb{C}\mathfrak{c}$$ with the following Lie multiplication:
$$[\alpha(k),\beta(l)]=k(\alpha,\beta)\delta_{k,-l}\mathfrak{c},\quad [\widehat{\mathfrak{h}},\mathfrak{c}]=0$$
where $\alpha(k)=\alpha\otimes t^{k}$ and $\alpha,\beta\in L; k,l\in
\mathbb{Z}$.

Let $\widehat{\mathfrak{h}}_{\pm}=\bigoplus_{k\in
\mathbb{Z}^{+}}\mathfrak{h}\otimes t^{\pm k}$ and
$S(\widehat{\mathfrak{h}}_{-})$ the symmetric algebra of
$\widehat{\mathfrak{h}}_{-}$. We give
$S(\widehat{\mathfrak{h}}_{-})$  an
$\widehat{\mathfrak{h}}_{+}\oplus\widehat{\mathfrak{h}}_{-}\oplus
\mathbb{C}\mathfrak{c}$-module structure by letting $\alpha(-k)$ act
as the multiplication by $\alpha(-k)$ for $k>0$,
$\alpha(k)$ the derivation determined by
$\alpha(k)\cdot\beta(-l)=\delta_{k,l}k(\alpha,\beta)$ for $k,l>0$
and $\mathfrak{c}$ the identity operator.

 For $i=0,1$, let
$L_{\overline{i}}=\{\alpha\in L|(\alpha,\alpha)\equiv i~
(\mbox{mod}~ 2)\}$, then $L=L_{\overline{0}}\oplus
L_{\overline{1}}$. Let $F:L\times L\rightarrow \{\pm1\}$ be the
cocycle satisfying $F(0,\alpha)=F(\alpha,0)$ and
$$F(\varepsilon_{i},\varepsilon_{j})
=\left\{
\begin{array}{ll}
1,&\mbox{if}~~i\leqslant j; \\
 -1,&\mbox{if}~~i> j.
\end{array}
\right.$$

Let $\mathbb{C}[L]$ be the complex vector space spanned by the basis
$\{e^{\gamma}|\gamma\in L\}$. We define a twisted group algebra
structure on $\mathbb{C}[L]$ by
$$e^{\alpha}e^{\beta}=F(\alpha,\beta)e^{\alpha+\beta}.$$
We define the tensor space $$V[L]=
S\big(\widehat{\mathfrak{h}}_{-}\big)\bigotimes_{\mathbb{C}}\mathbb{C}[L],$$
and define the action of $\widehat{\mathfrak{h}}$ on $V[\Gamma]$ as
follows
$$\alpha(k)\cdot(v\otimes e^{\beta})\\  \nonumber
=\delta_{k,0}(\alpha,\beta) (v\otimes
e^{\beta})+(1-\delta_{k,0})\alpha(k)\cdot v\otimes e^{\beta}
$$

Then the space $V[L]$ has a natural $\mathbb{Z}_{2}$-gradation:
$$V[L]=V[L]_{\overline{0}}\oplus V[L]_{\overline{1}}$$ where
$V[L]_{\overline{0}}$ (resp.$V[L]_{\overline{1}}$) is the vector
space spanned by $e^{\alpha}\otimes \beta\otimes t^{-j}$ with
$\alpha,\beta\in L; j\in \mathbb{Z}_{+}$ such that
$(\alpha,\alpha)\in 2\mathbb{Z}$ (resp.$(\alpha,\alpha)\in
2\mathbb{Z}+1$).

For $\alpha\in L$, we define
$$E^{\pm}(\alpha,z)=\mbox{exp}\bigg(\sum_{k\in\pm \mathbb{Z}_{+}}\frac{\alpha(k)}{k}z^{-k}\bigg)
\in \mbox{End} S(\widehat{\mathfrak{h}_{-}})[[z^{\mp 1}]]
$$
and introduce the operator $z^{\alpha(0)}$ as follows:
\begin{eqnarray*}
 && z^{\alpha(0)}(e^{\beta}\otimes\gamma\otimes t^{-j})
 =z^{(\alpha,\beta)}(e^{\beta}\otimes\gamma\otimes
 t^{-j})
 \end{eqnarray*}where for $\beta,\gamma\in L$ and $j\in
 \mathbb{Z}_{+}$.  Define the vertex operator $Y(\alpha,z)$:
 $$Y(\alpha,z)=e^{\alpha}z^{\alpha(0)}E^{-}(-\alpha,z)E^{+}(-\alpha,z)$$
and denote by
$$
X(\alpha,z)=\left\{
\begin{array}{ll}
z^{\frac{(\alpha,\alpha)}{2}}Y(\alpha,z),& \mbox{if}~\alpha\in
L_{\overline{0}};\\
Y(\alpha,z),& \mbox{if}~\alpha\in L_{\overline{1}}.
\end{array}
\right.
$$
Expand $X(\alpha,z)$ in powers of $z$:
$$X(\alpha,z)=\sum_{j\in\mathbb{Z}}X(\alpha,j)z^{-j-1}.$$
Note the components $X(\alpha, j)$ are well-defined operators.

In addition, for $\alpha\in L$, we define $$\alpha(z)=\sum_{k\in
\mathbb{Z}}\alpha(k)z^{-k-1}$$ then we have
$$[\alpha(z),\beta(w)]=(\alpha,\beta)\partial_{w}\delta(z-w), \quad\qquad
\alpha,\beta\in L.$$

\begin{proposition} On the space $V[L]$ one has that
\begin{eqnarray*}
1) &&
[X(\varepsilon_{i},z),X(\varepsilon_{j}-\varepsilon_{k},w)]=\delta_{ik}F(\varepsilon_{i},
  \varepsilon_{j}-\varepsilon_{k})X(\varepsilon_{j},w)\delta(z-w),\quad  j\neq k,\\
2) &&
[X(\varepsilon_{i},z),X(-\varepsilon_{j},w)]=\delta_{ij}F(\varepsilon_{i},
  -\varepsilon_{j})\partial_{w}\delta(z-w),\\
3)&&[X(\varepsilon_{i}-\varepsilon_{j},z),X(\varepsilon_{j}-\varepsilon_{i},w)]\\
&&\quad=F(\varepsilon_{i}-\varepsilon_{j},\varepsilon_{j}-\varepsilon_{i})
\big((\varepsilon_{i}-\varepsilon_{j})(z)\delta(z-w)+\partial_{w}\delta(z-w)\big),\\
4)&&[\alpha(z),X(\beta,w)]=(\alpha,\beta)X(\beta,z)\delta(z-w),\quad
\alpha,\beta\in L.
\end{eqnarray*}
\end{proposition}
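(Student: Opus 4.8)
The plan is to run the standard vertex-operator operator-product-expansion machinery, reducing each super-bracket to a radial-ordering difference that collapses onto derivatives of the formal delta function. The computational heart is a single reordering (contraction) identity. From the Heisenberg relations $[\alpha(k),\beta(l)]=k(\alpha,\beta)\delta_{k,-l}$ and the elementary fact that $e^{A}e^{B}=e^{B}e^{A}e^{[A,B]}$ when $[A,B]$ is central, a direct summation gives
\[
E^{+}(-\alpha,z)E^{-}(-\beta,w)=i_{z,w}(1-w/z)^{(\alpha,\beta)}\,E^{-}(-\beta,w)E^{+}(-\alpha,z).
\]
Feeding this together with $z^{\alpha(0)}e^{\beta}=z^{(\alpha,\beta)}e^{\beta}z^{\alpha(0)}$ and $e^{\alpha}e^{\beta}=F(\alpha,\beta)e^{\alpha+\beta}$ into $Y(\alpha,z)Y(\beta,w)$, and writing $:\!Y(\alpha,z)Y(\beta,w)\!:$ for the operator with all $E^{-}$ factors moved to the left of all $E^{+}$ factors, I obtain the master identity
\[
Y(\alpha,z)Y(\beta,w)=F(\alpha,\beta)\,i_{z,w}(z-w)^{(\alpha,\beta)}\,:\!Y(\alpha,z)Y(\beta,w)\!:,
\]
using $z^{(\alpha,\beta)}(1-w/z)^{(\alpha,\beta)}=i_{z,w}(z-w)^{(\alpha,\beta)}$. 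Since the $E^{-}$'s commute among themselves and likewise the $E^{+}$'s, the normal-ordered factor is symmetric in its two slots, so the opposite product is governed by the \emph{same} normal-ordered operator with $i_{z,w}$ replaced by $i_{w,z}$, $(z-w)$ by $(w-z)$, and $F(\alpha,\beta)$ by $F(\beta,\alpha)$.

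For each of (1)--(3) I would then form the super-bracket $X(\alpha,z)X(\beta,w)-(-1)^{(\alpha,\alpha)(\beta,\beta)}X(\beta,w)X(\alpha,z)$, carry along the scalar prefactors from the definition of $X$, and substitute the master identity. The pivotal input is the cocycle identity
\[
F(\beta,\alpha)=(-1)^{(\alpha,\beta)+(\alpha,\alpha)(\beta,\beta)}F(\alpha,\beta),
\]
which follows from the listed values $F(\varepsilon_{i},\varepsilon_{j})$ extended bimultiplicatively to $L$. This is exactly what makes the parity sign and the cocycle conspire so that the bracket reduces to $F(\alpha,\beta):\!Y(\alpha,z)Y(\beta,w)\!:\,[\,i_{z,w}(z-w)^{(\alpha,\beta)}-i_{w,z}(z-w)^{(\alpha,\beta)}\,]$. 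By the delta-function expansions recorded in Section~2, this bracketed difference vanishes when $(\alpha,\beta)\geq 0$ and otherwise equals $\partial_{w}^{(-(\alpha,\beta)-1)}\delta(z-w)$. This accounts for every Kronecker delta in the statement: (1) survives only when $i=k$, where $(\varepsilon_{i},\varepsilon_{j}-\varepsilon_{k})=-1$; (2) survives only when $i=j$; and (3) has $(\alpha,\beta)=-2$, giving a second-order pole.

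It remains to identify the surviving normal-ordered factor with the operators on the right. Because exponents add, one checks $:\!Y(\alpha,z)Y(\beta,w)\!:\big|_{z=w}=Y(\alpha+\beta,w)$, which converts the first-order pole of (1) into $F(\varepsilon_{i},\varepsilon_{j}-\varepsilon_{k})X(\varepsilon_{j},w)\delta(z-w)$ (after accounting for the scalar normalization in $X$) and the first-order pole of (2) into the central pairing term. For (3) a second-order pole survives, so I would apply the substitution calculus: treating $f(z,w)=\,:\!Y(\alpha,z)Y(-\alpha,w)\!:$ as a function of its first slot, $f(z,w)\partial_{w}\delta(z-w)=f(w,w)\partial_{w}\delta(z-w)+[\partial_{z}f]_{z=w}\,\delta(z-w)$, where $f(w,w)=Y(0,w)=1$ yields the $\partial_{w}\delta(z-w)$ term and $[\partial_{z}f]_{z=w}=\alpha(w)$ yields the Cartan-current term $(\varepsilon_{i}-\varepsilon_{j})(z)\delta(z-w)$. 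Relation (4) is independent and easier: a direct oscillator computation gives $[\alpha(k),X(\beta,w)]=(\alpha,\beta)w^{k}X(\beta,w)$ for every $k$, and summing against $z^{-k-1}$ yields $(\alpha,\beta)X(\beta,w)\delta(z-w)=(\alpha,\beta)X(\beta,z)\delta(z-w)$.

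The main obstacle is not any single computation but the simultaneous bookkeeping of three signs: the cocycle $F$, the parity factor $(-1)^{(\alpha,\alpha)(\beta,\beta)}$ that distinguishes commutators from anticommutators, and the direction of expansion ($i_{z,w}$ versus $i_{w,z}$, equivalently $(z-w)$ versus $(w-z)$). The cocycle identity above is the linchpin: only with the correct relation between $F(\alpha,\beta)$ and $F(\beta,\alpha)$ do the two radial orderings combine into the clean difference $[i_{z,w}-i_{w,z}](z-w)^{(\alpha,\beta)}$ rather than an ill-behaved sum that is neither a delta function nor its derivative. A secondary point of care is to retain the $[\partial_{z}f]_{z=w}\,\delta(z-w)$ term in the substitution rule for (3), since that term is precisely the Cartan current.
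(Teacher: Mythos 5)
Your strategy is the right one, and it is in fact more complete than what the paper offers: the paper's own ``proof'' of this proposition is pure citation (Lemma 1.8 of \cite{R} for 1), 2), 4), and \cite{FK} for 3)), so a self-contained OPE argument is welcome. Your master identity $Y(\alpha,z)Y(\beta,w)=F(\alpha,\beta)\,i_{z,w}(z-w)^{(\alpha,\beta)}:Y(\alpha,z)Y(\beta,w):$, the cocycle identity $F(\beta,\alpha)=(-1)^{(\alpha,\beta)+(\alpha,\alpha)(\beta,\beta)}F(\alpha,\beta)$ (which does hold for the paper's $F$ extended bimultiplicatively), and the reduction of the super-bracket to $F(\alpha,\beta):Y(\alpha,z)Y(\beta,w):\,[i_{z,w}-i_{w,z}](z-w)^{(\alpha,\beta)}$ are all correct, as is your treatment of 4). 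The genuine gap is in relation 2). By your own (correct) rule, the bracketed difference equals $\partial_w^{(-(\alpha,\beta)-1)}\delta(z-w)$; in 2) one has $(\varepsilon_i,-\varepsilon_i)=-1$, a \emph{simple} pole, so the rule yields $\partial_w^{(0)}\delta(z-w)=\delta(z-w)$, multiplied by $:Y(\varepsilon_i,z)Y(-\varepsilon_i,w):\big|_{z=w}=Y(0,w)=1$. Yet you assert this first-order pole becomes ``the central pairing term'' $\partial_w\delta(z-w)$. A simple pole can never produce $\partial_w\delta$; only $(\alpha,\beta)=-2$ can. What your machinery actually proves is $[X(\varepsilon_i,z),X(-\varepsilon_j,w)]=\delta_{ij}F(\varepsilon_i,-\varepsilon_j)\delta(z-w)$ (an anticommutator, both vectors being odd). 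That is also what the paper's Corollary 3.4 (the contraction of $X(\varepsilon_i,z)X(-\varepsilon_i,w)$ equals $1/(z-w)$) forces through the stated OPE dictionary, and what the Wick computations in Theorem 3.5 actually use; the printed $\partial_w$ in 2) is evidently a misprint. A blind proof must either derive the printed statement or flag the incompatibility; asserting both your general rule and the printed 2) is a contradiction.

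A secondary defect: you promise to ``carry along the scalar prefactors from the definition of $X$,'' but you never do, and carrying them literally breaks 1) and 3) as printed. With $X(\alpha,z)=z^{(\alpha,\alpha)/2}Y(\alpha,z)$ for even $\alpha$ and all series expanded in powers $z^{-j-1}$, relation 1) comes out as $wF(\varepsilon_i,\varepsilon_j-\varepsilon_k)X(\varepsilon_j,w)\delta(z-w)$ (an extra factor $w$), and in 3) the substitution calculus must be applied to $g(z,w)=zw:Y(\alpha,z)Y(-\alpha,w):$, giving $F\big[w^2\partial_w\delta(z-w)+\big(w+w^2\alpha(w)\big)\delta(z-w)\big]$ rather than the clean right-hand side. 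Your computation for 3) tacitly drops the prefactor $zw$ and applies the rule to $f=\,:Y(\alpha,z)Y(-\alpha,w):$ alone --- which is the only way to land on the stated formula, and amounts to the mode normalization $X(\alpha,z)=\sum_n X_nz^{-n}$ in which those powers are absorbed. So the sign-and-weight bookkeeping that you yourself single out as ``the main obstacle'' is precisely the step the proposal does not carry out consistently: either adopt and state the normalization under which 1) and 3) are exact, or record the extra powers of $w$.
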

\begin{proof}1), 2) and 4) are direct consequences of Lemma 1.8 in
\cite{R}. For 3), we refer the reader to \cite{FK}.
\end{proof}

In the following, we will  give a representation of
$\mathfrak{T}(A))$ on  the tensor space $V[L]\otimes\mathfrak{F}$.
It is easy to see that there is a $\mathbb{Z}_{2}-$gradation on this
space with the parity given by $p(e^{\alpha}\otimes x\otimes
y)=p(\alpha)$ for $\alpha\in L, x\in
S(\bigoplus_{j<0}(\mathfrak{h}\otimes t^{j}),y\in \mathfrak{F}$. The
vertex operators $X(\alpha,z),\alpha(z)$ act on the first component
and the bosonic fields $u(z)$ act on the second component. It
follows that
$$p(X(\alpha,z))=p(\alpha),\quad p(\alpha(z))=p(u(z))=\overline{0}.$$

For any two fields $a(z),b(w)$ with fixed parity, we define the
normal ordered product by:
\begin{eqnarray*}
:a(z)b(w):&=&a(z)_{+}b(w)-(-1)^{p(a)p(b)}b(w)a(z)_{-}\\
  &=&(-1)^{p(a)p(b)}:b(w)a(z):
\end{eqnarray*} where $a_{\pm}(z)$ is defined as
usual.

Furthermore, we define the contraction
 of two fields $a(z),b(w)$  by
$$\underbrace{a(z)b(w)}=a(z)b(w)-:a(z)b(w):.$$

We recall the general operator product expansion \cite{K1}. Suppose $a(z),b(w)$ are two
fields such that
$$[a(z),b(w)]=\sum_{j=0}^{N-1}c^{j}(w)\partial_{w}^{(j)}\delta(z-w),$$
where $N$ is a positive integer and  $c^{j}(w)$ are  formal
distributions in the indeterminate $z$, then we have that
$$\underbrace{a(z)b(w)}=\sum_{j=0}^{N-1}c^{j}(w)\frac{1}{(z-w)^{j+1}}.$$

\begin{corollary} For $u,v\in \mathcal{C};\alpha,\beta \in L$, one has
\begin{eqnarray*}
  1)&& \underbrace{u(z)v(w)}=<u,v>\frac{1}{z-w},\quad
\underbrace{u(z)X(\alpha ,w)}=0;\\
  2) &&\underbrace{X(\varepsilon_{i},z)X(-\varepsilon_{i} ,w)}=\frac{1}{z-w}
\end{eqnarray*}
\begin{proof} These are direct results of Proposition 3.1 and the OPE.
\end{proof}
\end{corollary}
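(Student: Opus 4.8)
The plan is to treat the three contractions separately, in each case reducing to a defining relation of the fields together with the operator product expansion recalled above. For the first identity in 1), I would start from Proposition 3.1, which gives $[u(z),v(w)]=\langle u,v\rangle\delta(z-w)$. This is precisely the hypothesis of the OPE formula with $N=1$ and $c^{0}(w)=\langle u,v\rangle$ a scalar, since $c^{0}(w)\partial^{(0)}_{w}\delta(z-w)=\langle u,v\rangle\delta(z-w)$. Reading off the right-hand side of the OPE formula then yields $\underbrace{u(z)v(w)}=\langle u,v\rangle\frac{1}{z-w}$, with the convention that $\frac{1}{z-w}$ is expanded in $|z|>|w|$. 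For the second identity in 1), the key point is structural rather than computational: by construction $u(z)$ acts only on the Fock factor $\mathfrak{F}$ while $X(\alpha,w)$ acts only on $V[L]$, and $u(z)$ is even. Hence the two fields supercommute, $[u(z),X(\alpha,w)]=0$, so every coefficient in the OPE vanishes and $\underbrace{u(z)X(\alpha,w)}=0$; equivalently $:u(z)X(\alpha,w):=u(z)X(\alpha,w)$ and the contraction is zero directly from its definition.

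The substantive step is 2), which I would establish directly from the definition of the vertex operators. Since $(\varepsilon_{i},\varepsilon_{i})=1$ we have $\pm\varepsilon_{i}\in L_{\overline{1}}$, so $X(\pm\varepsilon_{i},z)=Y(\pm\varepsilon_{i},z)$ carries no extra power of $z$. Expanding $Y(\alpha,z)=e^{\alpha}z^{\alpha(0)}E^{-}(-\alpha,z)E^{+}(-\alpha,z)$, I would move each annihilation exponential $E^{+}$ to the right of each creation exponential $E^{-}$. The only nontrivial reordering is $E^{+}(-\varepsilon_{i},z)$ past $E^{-}(\varepsilon_{i},w)$, which by Baker--Campbell--Hausdorff contributes the scalar $(1-w/z)^{(\varepsilon_{i},-\varepsilon_{i})}=(1-w/z)^{-1}$; together with the zero-mode relations $z^{\alpha(0)}e^{\beta}=z^{(\alpha,\beta)}e^{\beta}z^{\alpha(0)}$ and $e^{\varepsilon_{i}}e^{-\varepsilon_{i}}=F(\varepsilon_{i},-\varepsilon_{i})e^{0}$, where $F(\varepsilon_{i},-\varepsilon_{i})=F(\varepsilon_{i},\varepsilon_{i})^{-1}=1$, the scalar prefactors collect into $z^{-1}(1-w/z)^{-1}=i_{z,w}\frac{1}{z-w}$. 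This gives
$$Y(\varepsilon_{i},z)Y(-\varepsilon_{i},w)=i_{z,w}\frac{1}{z-w}:Y(\varepsilon_{i},z)Y(-\varepsilon_{i},w):,$$
and since the $E^{\pm}$ factors cancel in pairs at coincident points so that $:Y(\varepsilon_{i},w)Y(-\varepsilon_{i},w):=1$, extracting the singular part yields $\underbrace{X(\varepsilon_{i},z)X(-\varepsilon_{i},w)}=\frac{1}{z-w}$.

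I expect the bookkeeping in 2) to be the main obstacle: one must track the cocycle value $F(\varepsilon_{i},-\varepsilon_{i})$ and the two zero-mode powers of $z$ simultaneously and confirm that they combine into the single factor $i_{z,w}\frac{1}{z-w}$ with coefficient exactly $1$, and one must verify that the coincident-point value of the normally ordered product is the identity rather than a nontrivial operator. I would carry out this direct reordering rather than invoke a bracket relation, since the target $\frac{1}{z-w}$ corresponds to a $\delta(z-w)$-type singularity and is cleanest to read off from the multiplicative operator product expansion; the two parts of 1) are then immediate from Proposition 3.1 and the tensor-factor decomposition of $V[L]\otimes\mathfrak{F}$.
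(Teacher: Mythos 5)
Your proof is correct. For part 1) it coincides with the paper's argument: Proposition 3.1 is exactly the hypothesis of the quoted OPE formula with $N=1$ and $c^{0}(w)=\langle u,v\rangle$, and the second contraction vanishes because $u(z)$ and $X(\alpha,w)$ act on different tensor factors of $V[L]\otimes\mathfrak{F}$ with $u(z)$ even. For part 2), however, you take a genuinely different route. The paper's one-line proof (``direct results of Proposition 3.1 and the OPE'') cannot literally cover 2), since Proposition 3.1 concerns only the Weyl fields; what is evidently intended is to feed the vertex-operator bracket of Proposition 3.2(2) into the OPE theorem in the direction bracket $\Rightarrow$ contraction. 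You instead compute the product $Y(\varepsilon_{i},z)Y(-\varepsilon_{i},w)$ from scratch by the Frenkel--Kac reordering, collecting the zero-mode factor $z^{-1}$, the exponential factor $(1-w/z)^{-1}$, and the cocycle value $F(\varepsilon_{i},-\varepsilon_{i})=F(\varepsilon_{i},\varepsilon_{i})^{-1}=1$, and then read off the simple pole. Your route is longer but self-contained, and it has a concrete payoff: Proposition 3.2(2) as printed reads $\delta_{ij}F(\varepsilon_{i},-\varepsilon_{j})\partial_{w}\delta(z-w)$, which through the OPE theorem would force $\underbrace{X(\varepsilon_{i},z)X(-\varepsilon_{i},w)}=\frac{1}{(z-w)^{2}}$ and contradict the corollary; your computation shows the bracket is really $\delta_{ij}F(\varepsilon_{i},-\varepsilon_{j})\delta(z-w)$, so the corollary's simple pole is the correct statement (and is indeed what the proof of the main theorem uses), meaning the citation route works only after correcting that $\partial_{w}$. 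One point worth making explicit in your write-up: the final step, identifying the contraction $\underbrace{a(z)b(w)}=a(z)b(w)-\,:a(z)b(w):$ with the singular part of the product you computed, is itself an application of the same OPE/decomposition theorem the paper quotes, so the two arguments ultimately rest on the same result, used in opposite directions.
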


The following well-known Wick's theorem is useful for calculating
the operator product expansions of normally ordered products of free
fields.

\begin{theorem} (\cite{K1}) Let $A^{1},A^{2},\cdots,A^{M}$ and
$B^{1},B^{2},\cdots,B^{N}$ be two collections of fields with
definite parity. Suppose these fields satisfy the following
properties:
\begin{eqnarray*}
  1) && [\underbrace{A^{i}B^{j}},Z^{k}]=0,\mbox{for all}~ i,j,k ~\mbox{and}~Z=A~or~B;\\
  2) &&[A^{i}_{\pm},B^{j}_{\pm}]=0,\mbox{for all}~ i,j.
\end{eqnarray*}
then we have that
\begin{align*}
& :A^{1}\cdots A^{M}::B^{1}\cdots B^{N}: \\=&
\sum_{s=0}^{\min\{M,N\}}\sum_{i_{1}<\dots<i_{s}\atop
j_{1}\neq\dots\neq
j_{s}}\pm\Big(\underbrace{A^{i_{1}}B^{j_{1}}}\cdots
\underbrace{A^{i_{s}}B^{j_{s}}}:A^{1}\cdots A^{M}B^{1}\cdots
B^{N}:_{(i_{1},\dots,i_{s},j_{1},\dots,j_{s})}\Big)
\end{align*}
where the subscript $(i_{1},\cdots,i_{s},j_{1},\cdots,j_{s})$ means
the fields $A^{i_{1}},\dots,A^{i_{s}}$, $B^{j_{1}}$, $\dots$,
$B^{j_{s}}$ are removed and the sign $\pm$ is obtained by the rule:
each permutation of the adjacent odd fields changes the sign.
\end{theorem}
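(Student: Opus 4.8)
The plan is to prove the formula by reducing it to repeated commutation of annihilation operators past creation operators, organized as an induction on the number $M$ of fields in the first block. The guiding observation is that $:A^1\cdots A^M:$ already has all of its annihilation parts to the right and $:B^1\cdots B^N:$ has all of its creation parts to the left, so the only reason the product is not itself normally ordered is that the annihilation parts $A^i_-$ stand to the left of the creation parts $B^j_+$. Thus the entire content of the theorem is the bookkeeping of commuting these two groups across one another. Each elementary exchange of an $A^i_-$ with a $B^j_+$ produces, besides the reordered term, precisely the contraction $\underbrace{A^iB^j}$; hypothesis (2) guarantees that no other elementary commutator across the divide survives, and hypothesis (1) guarantees that each such contraction is central and may therefore be pulled out in front of the remaining operators.

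First I would establish the single-field case, namely the identity for $:A^1\cdots A^M:\,b$ with a single field $b$. Here $b_-$ is already in annihilation position, while $b_+$ must be carried leftward through the block $:A^1\cdots A^M:$; passing $b_+$ across each $A^i$ contributes exactly one central contraction $\underbrace{A^ib}$, with a Koszul sign recording the odd fields $b_+$ has moved past, and leaves the remaining $M-1$ fields normally ordered. Because $\underbrace{A^ib}$ is central and because the $A$-block is never internally reordered, this step is a direct computation using only the two-field relation between $ab$, $:ab:$ and $\underbrace{ab}$ together with hypotheses (1) and (2). With the single-field lemma in hand I would run the induction on $M$, feeding the lemma's single contractions into the inductive hypothesis for the shorter products; collecting the resulting central scalars $\underbrace{A^{i_r}B^{j_r}}$ in front assembles exactly the sum over all choices $i_1<\cdots<i_s$ with $j_1,\dots,j_s$ distinct, since each $A$ can be paired with at most one $B$ and conversely.

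The main obstacle will be the sign bookkeeping in the graded setting rather than any analytic difficulty. Every time $b_+$ (or, in the inductive step, an annihilation operator) is permuted past an odd field to bring a contracted pair adjacent, a factor $\pm1$ is generated, and I must verify that the signs accumulated through the nested recursions combine into the single clean rule stated in the theorem, that each transposition of adjacent odd fields flips the sign. Closely tied to this is the combinatorial check that the index set is exactly $\{i_1<\cdots<i_s;\ j_1,\dots,j_s\ \text{distinct}\}$, with every contraction pattern produced once and with a consistent sign; hypotheses (1) and (2) are precisely what forbid internal contractions within the $A$-block or the $B$-block and thereby keep this pairing combinatorics clean.
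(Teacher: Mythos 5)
A preliminary remark: the paper does not prove this statement at all --- it is quoted from Kac's book \cite{K1} and used as a black box --- so there is no in-paper argument to compare you against. The comparison below is therefore with the standard proof in the literature, which is essentially the one you are reconstructing. Your skeleton is the right one: both blocks are already normally ordered, so the only obstruction is that the annihilation parts $A^i_-$ stand to the left of the creation parts $B^j_+$; hypothesis (2) gives $[A^i_-,B^j_+]=[A^i_-,B^j]=\underbrace{A^iB^j}$ (because $[A^i_-,B^j_-]=0$); and hypothesis (1) lets each such contraction be pulled out in front of whatever operators remain. These are exactly the points on which the theorem turns, and your single-field lemma for $:A^1\cdots A^M:\,b$ is the correct first step.

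Two things need repair before this is a proof. First, the induction variable: your single-field lemma already treats $:A^1\cdots A^M:\,b$ for \emph{arbitrary} $M$, i.e.\ the case $N=1$, so the induction must run on $N$, not on $M$ as you state. This is not a mere typo, because the inductive step on $N$ is where the real work sits: one must peel off $B^1$ via the recursive definition $:B^1\cdots B^N:\;=\;B^1_+\,:B^2\cdots B^N:\;\pm\;:B^2\cdots B^N:\,B^1_-$, apply the induction hypothesis to $:A^1\cdots A^M:\,:B^2\cdots B^N:$, push $B^1_+$ through the $A$-block with the lemma, and then \emph{recombine} the $B^1_+$-terms with the $B^1_-$-terms so that they reassemble into normally ordered products containing $B^1$. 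Your sketch never addresses this recombination, and it is precisely there that the sign rule and the indexing over $i_1<\cdots<i_s$, $j_1\neq\cdots\neq j_s$ have to be verified --- deferring ``the sign bookkeeping'' defers the actual content of the step. Second, a conceptual slip: hypotheses (1) and (2) do not ``forbid internal contractions within the $A$-block or the $B$-block.'' Internal contractions never arise simply because neither block is internally reordered anywhere in the argument; what the hypotheses actually do is kill the like-signed cross commutators $[A^i_\pm,B^j_\pm]$ (so the only surviving elementary commutator is $[A^i_-,B^j_+]$) and make the resulting contractions central. As it stands, your proposal is a viable plan following the standard route, but its two hardest steps --- the inductive recombination and the sign verification --- are flagged rather than carried out.
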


Now we state the main result in this paper.

\begin{theorem} The  map defined below
$$x_{i}^{+}(z)\mapsto
\left\{\begin{array}{lll}
\frac{1}{2}:\beta^{*}(z)\varepsilon_{1}^{*}(z):,&i=0;\\
\sqrt{-1}:\varepsilon_{i}(z)\varepsilon^{*}_{i+1}(z):,&1\leqslant
i\leqslant
n-1;\\
:X(\varepsilon_{n+1},z)\varepsilon^{*}_{n}(z):,&i=n;\\
X(\varepsilon_{i}-\varepsilon_{i+1},z),&n+1\leqslant i\leqslant
n+m-1\\
X(\varepsilon_{n+m-1}+\varepsilon_{n+m},z),&i=m+n.
\end{array}
\right.$$
$$x_{i}^{-}(z)\mapsto
\left\{\begin{array}{lll}
\frac{1}{2}:\beta(z)\varepsilon_{1}(z):,& i=0;\\
\sqrt{-1}:\varepsilon^{*}_{i}(z)\varepsilon_{i+1}(z):,&1\leqslant
i\leqslant
n-1;\\
:X(-\varepsilon_{n+1},z)\varepsilon_{n}(z):,&i=n;\\
X(\varepsilon_{i+1}-\varepsilon_{i},z),&n+1\leqslant i\leqslant
n+m-1\\
X(-\varepsilon_{n+m-1}-\varepsilon_{n+m},z),&i=m+n.
\end{array}
\right.$$

$$\alpha_{i}(z)\mapsto
\left\{\begin{array}{lll}
-\frac{1}{2}\big(:\varepsilon_{1}(z)\varepsilon_{1}^{*}(z):+:\varepsilon_{1}(z)\beta^{*}(z):+\\\qquad\quad
:\beta(z)\varepsilon_{1}^{*}(z):+:\beta(z)\beta^{*}(z):\big),& i=0;\\
:\varepsilon_{i}(z)\varepsilon_{i}(z):-:\varepsilon_{i+1}(z)\varepsilon_{i+1}(z):,&1\leqslant i\leqslant m;\\
-\varepsilon_{n+1}(z)-:\varepsilon_{n}(z)\varepsilon_{n}^{*}(z):,&i=n;\\
(\varepsilon_{i}-\varepsilon_{i+1})(z),&n+1\leqslant i\leqslant
n+m-1\\
(\varepsilon_{n}+\varepsilon_{n+m})(z),&i=m+n.
\end{array}
\right.$$ gives rise to a level -1 representation on the space
$V[L]\otimes \mathfrak{F}$ for the  2-toroidal Lie superalgebra of
type $D(m,n)$.
\end{theorem}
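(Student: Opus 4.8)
The plan is to verify directly that the assignment respects every defining relation 1)--5) of Theorem 2.1, with the central element realized as $\mathcal{K}\mapsto -\mathrm{id}$ (this is the content of ``level $-1$''). Since the carrier space factors as $V[L]\otimes\mathfrak{F}$, with the vertex operators $X(\alpha,z)$ and the Heisenberg currents $\alpha(z)$ acting on the first tensor factor and the Weyl fields $u(z)$ on the second, the generators split into three families: the purely bosonic currents built from the Weyl fields $\varepsilon_i(z),\varepsilon_i^*(z),\beta(z),\beta^*(z)$ (nodes $0\le i\le n-1$, realizing the symplectic block), the purely lattice currents $X(\pm(\varepsilon_i-\varepsilon_{i+1}),z)$ together with the free bosons $(\varepsilon_i-\varepsilon_{i+1})(z)$ (nodes $n+1\le i\le n+m$, realizing the orthogonal block), and the single mixed current at the odd node $i=n$, where the vertex operator $X(\pm\varepsilon_{n+1},z)$ is coupled to the Weyl field $\varepsilon_n^{(*)}(z)$. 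The computational engine throughout is Wick's theorem (Theorem 3.4) together with the contractions recorded in Corollary 3.3 and the operator-product-to-commutator dictionary via $\partial_w^{(j)}\delta(z-w)$; the Heisenberg and vertex brackets of Propositions 3.1 and 3.2 dispatch the cases that need no normal reordering.

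First I would clear the easy relations. Relation 1) is immediate because $\mathcal{K}$ is the scalar $-\mathrm{id}$. Relations 2) and 3) are linear in one current and reduce, node by node, either to the free-field bracket $[\alpha(z),\beta(w)]=(\alpha,\beta)\partial_w\delta(z-w)$ and Proposition 3.2(4) on the lattice side, or to a single application of Wick's theorem on the Weyl side. The essential bookkeeping is that the positive-definite form $(\varepsilon_i,\varepsilon_j)=\delta_{ij}$ carried by $L$ and $\mathcal{C}$ differs from the invariant form $(\alpha_i|\alpha_j)=d_ia_{ij}$ of $D(m,n)$ precisely by an overall sign on the orthogonal block $n+1\le i\le n+m$, where $d_i=-1$. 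This sign is exactly the effect of the substitution $\delta_i=\sqrt{-1}\,\varepsilon_{n+i}$, and it is what turns the naive level $+1$ lattice realization of the orthogonal affine algebra into the desired level $-1$ one; since relation 3) carries no central term in which to absorb a sign, confirming that the $\sqrt{-1}$'s and the choice $\mathcal{K}=-\mathrm{id}$ render relations 2) and 3) simultaneously consistent across the junction between the two blocks is the first point demanding care.

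Next I would treat relations 4) and 5), which carry the real content. For the non-isotropic Chevalley pairs I would expand $[x_i^+(z),x_i^-(w)]$ by Wick's theorem, so that on the lattice block the single contraction $\underbrace{X(\varepsilon_i-\varepsilon_{i+1},z)X(\varepsilon_{i+1}-\varepsilon_i,w)}$ and on the symplectic block the paired Weyl contractions produce, after passing from $(z-w)^{-1}$ and $(z-w)^{-2}$ to $\delta(z-w)$ and $\partial_w\delta(z-w)$, exactly $-\tfrac{2}{(\alpha_i|\alpha_i)}\{\alpha_i(w)\delta(z-w)+\partial_w\delta(z-w)\mathcal{K}\}$ with $\mathcal{K}=-\mathrm{id}$; Proposition 3.2(3) supplies the orthogonal cases in closed form. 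The vanishing $[x_i^+(z),x_j^-(w)]=0$ for $i\ne j$ follows because the relevant fields have no net contraction (the effective Weyl labels are disjoint after pairing through $\langle\,,\rangle$, or the lattice vectors $\alpha,\beta$ satisfy $(\alpha,\beta)\ge 0$ and give a regular product). For the Serre relations in 5) I would use that $X(\alpha,z)X(\alpha,w)$ carries the factor $(z-w)^{(\alpha,\alpha)}$, so that iterated brackets vanish once enough factors of $(z-w)$ accumulate, reducing each Serre identity to a finite contraction count.

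The main obstacle is the odd isotropic node $i=n$, where $x_n^\pm(z)=\,:X(\pm\varepsilon_{n+1},z)\varepsilon_n^{(*)}(z):$ couples the two tensor factors and is an \emph{odd} field. Here relation 4) must reproduce the first (isotropic) formula $-\{\alpha_n(w)\delta(z-w)+\partial_w\delta(z-w)\mathcal{K}\}$: the lattice contraction $\underbrace{X(\varepsilon_{n+1},z)X(-\varepsilon_{n+1},w)}=(z-w)^{-1}$ of Corollary 3.3(2) must combine with the Weyl contraction of $\varepsilon_n^*$ against $\varepsilon_n$ so that the single-contraction terms assemble into $\alpha_n(w)=-\varepsilon_{n+1}(w)-:\varepsilon_n(w)\varepsilon_n^*(w):$, while the double contraction yields the central term. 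Because $X(\varepsilon_{n+1},z)$ and $\varepsilon_n^{(*)}(z)$ are both odd, every reordering in Wick's theorem contributes a sign, and the anticommutator $\{x_n^+(z),x_n^+(w)\}=0$ of relation 5), as well as the mixed cubic Serre relations $[x_n^\pm(z_1),[x_n^\pm(z_2),x_j^\pm(w)]]=0$ for $j=n\pm 1$, hinge on these signs cancelling exactly. Tracking the parities through Theorem 3.4, and confirming that the constants $\tfrac12$ and $\sqrt{-1}$ together with $\mathcal{K}=-\mathrm{id}$ make all three blocks fit into one level $-1$ representation, is where the bulk of the work lies.
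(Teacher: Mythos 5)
Your proposal follows essentially the same route as the paper's own proof: both verify the loop-algebra presentation relations 1)--5) of Theorem 2.1 node by node, splitting the generators into the Weyl-field (symplectic) family, the lattice vertex-operator (orthogonal) family, and the mixed odd node $i=n$, with Wick's theorem, the contractions of Corollary 3.3, and Propositions 3.1--3.2 as the computational tools and with $\mathcal{K}$ acting as $-1$. The paper simply carries out the representative case-by-case computations you outline (the Chevalley pairs, the isotropic node, and sample Serre relations), so there is no substantive difference in method.
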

\begin{proof} We prove the theorem by checking  the field
operators defined above satisfying relations $1)$
--- $5)$ listed in Proposition 2.1.

First of all, we check $4)$ and $3)$ with the help of
 Wick's theorem.
\begin{align*}
&[x_{0}^{+}(z),x_{0}^{-}(w)]\\
&=\frac{1}{4}\big((:\varepsilon_{1}\varepsilon_{1}^{*}(z):+:\varepsilon_{1}(z)\beta^{*}(z):+
:\beta(z)\varepsilon_{1}^{*}(z):+:\beta(z)\beta^{*}(z):)
\delta(z-w)\\
&\hskip 2in +2\partial_{w}\delta(z-w)\big)\\
&=-\frac{2}{(\alpha_{0},\alpha_{0})}\big(\alpha_{0}(z)\delta(z-w)+\partial_{w}\delta(z-w)\cdot
(-1)\big),
\end{align*}
\begin{eqnarray*}
[\alpha_{0}(z),x_{0}^{\pm}(w)]=0=\pm(\alpha_{0},\alpha_{0})x_{0}^{\pm}(w)\delta(z-w).
\end{eqnarray*}
For $1\leqslant i\leqslant n-1$, we have that
\begin{eqnarray*}
[x_{i}^{+}(z),x_{i}^{-}(w)]
&=&-\big((:\varepsilon_{i}(z)\varepsilon_{i}(z):-:\varepsilon_{i+1}(z)\varepsilon_{i+1}(z):)\delta(z-w)+\partial_{w}\delta(z-w)\big)\\
&=&-\frac{2}{(\alpha_{i},\alpha_{i})}\big(\alpha_{i}(z)\delta(z-w)+\partial_{w}\delta(z-w)\cdot
(-1)\big).
\end{eqnarray*}
and
$[\alpha_{i}(z),x_{i}^{\pm}(w)]=\pm(\alpha_{i},\alpha_{i})x_{i}^{\pm}(w)\delta(z-w)$.

Next, one check that
\begin{align*}
&[x_{n}^{+}(z),x_{n}^{-}(w)]_{+}\\
&=\big(:\varepsilon^{*}_{n}(z)\varepsilon_{n}(z):+:X(\varepsilon_{n+1},z)X(-\varepsilon_{n+1},z):\big)
\delta(z-w)+\partial_{w}\delta(z-w)\\
&=-\big(\alpha_{n}(z)\delta(z-w)+\partial_{w}\delta(z-w)\cdot
(-1)\big),
\end{align*}
and
\begin{eqnarray*}
[\alpha_{n}(z),x_{n}^{\pm}(w)]=0=\pm(\alpha_{n},\alpha_{n})x_{n}^{\pm}(w)\delta(z-w).
\end{eqnarray*}
For $n+1\leqslant i\leqslant m+n-1$, we have that
\begin{eqnarray*}
[x_{i}^{+}(z),x_{i}^{-}(w)]&=&-\big((\varepsilon_{i}-\varepsilon_{i+1})(z)\big)\delta(z-w)+\partial_{w}\delta(z-w)\\
&=&-\frac{2}{(\alpha_{i},\alpha_{i})}\big(\alpha_{i}(z)\delta(z-w)+\partial_{w}\delta(z-w)\cdot
(-1)\big)
\end{eqnarray*}
and
$[\alpha_{i}(z),x_{i}^{\pm}(w)]=\pm(\alpha_{i},\alpha_{i})x_{i}^{\pm}(w)\delta(z-w).
$

 For the $n+m$-th vertex, one has
\begin{eqnarray*}
[x_{n+m}^{+}(z),x_{n+m}^{-}(w)]&=&-\big((\varepsilon_{n+m-1}+\varepsilon_{n+m})(z)\big)\delta(z-w)+\partial_{w}\delta(z-w)\\
&=&-\frac{2}{(\alpha_{n+m},\alpha_{n+m})}\big(\alpha_{n+m}(z)\delta(z-w)+\partial_{w}\delta(z-w)\cdot
(-1)\big)
\end{eqnarray*}and
$[\alpha_{n+m}(z),x_{n+m}^{\pm}(w)]=\pm(\alpha_{n+m},\alpha_{n+m})x_{n+m}^{\pm}(w)\delta(z-w).
$

 For all $i\neq j$, we have $ [x_{i}^{+}(z),x_{j}^{-}(w)]=0$ and
for any unconnected vertices
\begin{eqnarray*}
[\alpha_{i}(z),x_{j}^{\pm}(w)]=0=\pm(\alpha_{i},\alpha_{j})x_{j}^{\pm}(w)\delta(z-w)
\end{eqnarray*}
All the rest can be checked by straightforward calculation, for
examples
\begin{align*}
[\alpha_{0}(z),x_{1}^{+}(w)]
&=-2\sqrt{-1}:\varepsilon_{1}(z)\varepsilon^{*}_{2}(z):\delta(z-w)\\
&=(\alpha_{0},\alpha_{1})x_{1}^{+}(w)\delta(z-w),
\end{align*} where have use the property
$\beta(z)=-c(z)+\varepsilon_{1}(z)$ and
$:c(z)\varepsilon^{*}_{2}(z):=0$
\begin{align*}
[\alpha_{n-1}(z),x_{n}^{+}(w)]
&=:X(\varepsilon_{n-1},z)\varepsilon_{n}^{*}(z):\delta(z-w)\\
&=(\alpha_{n-1},\alpha_{n})x_{n}^{+}(w)\delta(z-w)\end{align*}
 By
proposition 3.2, we have
\begin{align*}[\alpha_{n+m-2}(z),x_{m+n-1}^{+}(w)]
&=(\alpha_{m+n-2},\alpha_{m+n-1})x_{m+n-1}^{+}(w)\delta(z-w)\\
[\alpha_{n+m-2}(z),x_{m+n}^{+}(w)]
&=(\alpha_{m+n-2},\alpha_{m+n})x_{m+n}^{+}(w)\delta(z-w).
\end{align*}
and others can be proved similarly.

Secondly, we can check $2)$ case by case by using Proposition 3.2
and we include the following examples
\begin{eqnarray*}
&&[\alpha_{0}(z),\alpha_{0}(w)]=-4\partial_{w}\delta(z-w)=(\alpha_{0},\alpha_{0})\partial_{w}\delta(z-w)\cdot(-1)\\
&&[\alpha_{0}(z),\alpha_{1}(w)]=2\partial_{w}\delta(z-w)=(\alpha_{0},\alpha_{1})\partial_{w}\delta(z-w)\cdot(-1)
\end{eqnarray*}

Finally, we proceed to check the Serre relations. It is easy to
verify that $[x_{i}^{\pm}(z),x_{i}^{\pm}(w)]=0$ for $0\leqslant
i\leqslant m+n$ and $[x_{i}^{\pm}(z),x_{j}^{\pm}(w)]=0$ for $ i\neq
j,a_{ij}=0$. The rest can be checked directly:
\begin{eqnarray*}
  &&[x_{0}^{+}(z_{1}),[x_{0}^{+}(z_{2}),x_{1}^{+}(w)]]\\
  &=&\frac{\sqrt{-1}}{4}[:\beta^{*}(z_{1})\varepsilon_{1}^{*}(z_{1}):,[:\beta^{*}(z_{2})\varepsilon_{1}^{*}(z_{2}):,
  :\varepsilon_{1}(w)\varepsilon^{*}_{2}(w):]]\\
   &=&\frac{\sqrt{-1}}{4}[:\beta^{*}(z_{1})\varepsilon_{1}^{*}(z_{1}):,:\varepsilon^{*}_{1}(w)\varepsilon^{*}_{2}(w):]\delta(z_{2}-w)\\
   &=&0
\end{eqnarray*}
\begin{eqnarray*}
  &&[x_{1}^{+}(z_{1}),[x_{1}^{+}(z_{2}),[[x_{1}^{+}(z_{3}),x_{0}^{+}(w)]]]\\
  &=&\frac{\sqrt{-1}}{2}[:\varepsilon_{1}(z_{1})\varepsilon_{2}^{*}(z_{1}):,[:\varepsilon_{1}(z_{2})\varepsilon_{2}^{*}(z_{2}):,
  [:\varepsilon_{1}(z_{3})\varepsilon_{2}^{*}(z_{3}):,  :\beta^{*}(w)\varepsilon^{*}_{1}(w):]]]\\
  &=&-\frac{\sqrt{-1}}{2}[:\varepsilon_{1}(z_{1})\varepsilon_{2}^{*}(z_{1}):,[:\varepsilon_{1}(z_{2})\varepsilon_{2}^{*}(z_{2}):,
  ,:\varepsilon_{2}^{*}(z_{3})\varepsilon^{*}_{1}(z_{3}):]]\delta(z_{3}-w)\\
   &=&\frac{\sqrt{-1}}{2}[:\varepsilon_{1}(z_{1})\varepsilon_{2}^{*}(z_{1}):,:\varepsilon_{2}^{*}(z_{2})\varepsilon_{2}^{*}(z_{2})]\delta(z_{2}-z_{3})\delta(z_{3}-w)\\
   &=&0
\end{eqnarray*}
\begin{eqnarray*}
  &&[x_{m+1}^{+}(z_{1}),[x_{m+1}^{+}(z_{2}),x_{m}^{+}(w)]]\\
  &=&[:X(\varepsilon_{m+1},z_{1})\delta^{*}_{1}(z_{1}):,[:X(\varepsilon_{m+1},z_{2})\delta^{*}_{1}(z_{2}):,X(\varepsilon_{m}-\varepsilon_{m+1},w)]]\\
   &=&[:X(\varepsilon_{m+1},z_{1})\delta^{*}_{1}(z_{1}):,;X(\varepsilon_{m},w)\delta_{1}^{*}(w):]\delta(z_{2}-w)\\
   &=&0
\end{eqnarray*}
\begin{eqnarray*}
  &&[x_{m+1}^{+}(z_{1}),[x_{m+1}^{+}(z_{2}),x_{m+2}^{+}(w)]]\\
  &=&\sqrt{-1}[:X(\varepsilon_{m+1},z_{1})\delta^{*}_{1}(z_{1}):,[:X(\varepsilon_{m+1},z_{2})\delta^{*}_{1}(z_{2}):
  ,:\delta_{1}(w)\delta^{*}_{2}(w):]]\\
   &=&-\sqrt{-1}[:X(\varepsilon_{m+1},z_{1})\delta^{*}_{1}(z_{1}):,;X(\varepsilon_{m+1},w)\delta_{2}^{*}(w):]\delta(z_{2}-w)\\
   &=&0
\end{eqnarray*}
The remaining relations follow similarly by Wick's theorem.  This
completes the proof of the theorem.
\end{proof}

\bigskip

\centerline{\bf Acknowledgments} The research is supported by the
National Natural Science Foundation of China (Nos. 11271138,
11531004, 11301393), Zhejiang Natural Science Foundation (grant No.
LY16A010016), Project from Zhejiang province (grant No. FX2014099)
and Simons Foundation (grant no. 198129).

\end{document}